\numberwithin{equation}{section}
 \newtheorem{theorem}{Theorem}
 \newtheorem{lemma}[theorem]{Lemma}
\theoremstyle{definition}
 \newtheorem{definition}[theorem]{Definition}
\theoremstyle{remark}
\begin{document}
\title[Radon transform]{Inversion of seismic-type Radon transforms on the plane}
\author[H.~Chihara]{Hiroyuki Chihara}
\address{College of Education, University of the Ryukyus, Nishihara, Okinawa 903-0213, Japan}
\email{aji@rencho.me}
\subjclass[2010]{Primary 44A12; Secondary 65R10, 86A15, 86A22}
\keywords{Radon transform, X-ray transform, seismology}
\begin{abstract}
We study integral transforms mapping a function on the Euclidean plane 
to the family of its integration on plane curves, 
that is, a function of plane curves. 
The plane curves we consider in the present paper are given 
by the graphs of functions with a fixed axis of the independent variable, 
and are imposed some symmetry with respect to the axes.  
These transforms contain 
the parabolic Radon transform 
and the hyperbolic Radon transform 
arising from seismology. 
We prove the inversion formulas for these transforms 
under some vanishing and symmetry conditions of functions. 
\end{abstract}
\maketitle
\section{Introduction}
\label{section:introduction}
Fix arbitrary $c\in\mathbb{R}$ and $\alpha,\beta>1$. 
Let $(x,y), (s,u) \in \mathbb{R}^2$ be variables of functions. 
We study three types of integral transforms 
$\mathcal{P}_\alpha f(s,u)$, 
$\mathcal{Q}_\alpha f(s,u)$
and 
$\mathcal{R}_{\alpha,\beta} f(s,u)$ 
of a function $f(x,y)$. 
These are the integration of $f(x,y)$ on the graph of functions 
of the fixed axis $x$. 
Their precise definitions are the following. 
\par
Firstly, 
$\mathcal{P}_\alpha f(s,u)$ is defined by 
$$
\mathcal{P}_{\alpha}f(s,u)
=
\int_{-\infty}^\infty
f(x,s\lvert{x-c}\rvert^\alpha+u)
dx
=
\int_{-\infty}^\infty
f(x+c,s\lvert{x}\rvert^\alpha+u)
dx. 
$$
$\mathcal{P}_{\alpha}f(s,u)$ is the integration of $f$ 
on a curve 
$$
\Gamma_\mathcal{P}(\alpha;s,u)
=
\{(x,s\lvert{x-c}\rvert^\alpha+u)\ : \ x\in\mathbb{R}\}, 
$$
and slightly different from the standard contour integral on 
$\Gamma_\mathcal{P}(\alpha;s,u)$ because the integration does not contain 
the line element. 
In particular $\Gamma_\mathcal{P}(2l;s,u)$ with $l=1,2,3,\dotsc$ and $s\ne0$ 
is a translation and a vertical dilation of the graph of monomial $x^{2l}$.  
Moreover $\mathcal{P}_2f(s,u)$ is called 
the parabolic Radon transform of $f$ in seismology.  
Note that 
$$
f(-x+c,s\lvert{-x}\rvert^\alpha+u)=f(-x+c,s\lvert{x}\rvert^\alpha+u)
$$
and $f$ can be splitted into 
$$
f(x+c,y)
=
\frac{f(x+c,y)+f(-x+c,y)}{2}
+
\frac{f(x+c,y)-f(-x+c,y)}{2}.
$$
The second term of the right hand side of the above 
does not contribute to $\mathcal{P}_{\alpha}f(s,u)$. 
So it is natural to consider functions satisfying 
$f(-x+c,y)=f(x+c,y)$ for $\mathcal{P}_\alpha$. 
Otherwise the injectivity of $\mathcal{P}_\alpha$ breaks down, 
and the inversion formula for $\mathcal{P}_\alpha$ cannot be expected. 
\par
Secondly, 
$\mathcal{Q}_\alpha f(s,u)$ is defined by 
$$
\mathcal{Q}_{\alpha}f(s,u)
=
\int_{-\infty}^\infty
f(x,s(x-c)\lvert{x-c}\rvert^{\alpha-1}+u)
dx
=
\int_{-\infty}^\infty
f(x+c,sx\lvert{x}\rvert^{\alpha-1}+u)
dx. 
$$
$\mathcal{Q}_{\alpha}f(s,u)$ is the integration of $f$ 
on a curve 
$$
\Gamma_\mathcal{Q}(\alpha;s,u)
=
\{(x,s(x-c)\lvert{x-c}\rvert^{\alpha-1}+u)\ : \ x\in\mathbb{R}\}, 
$$
and slightly different from the standard contour integral on 
$\Gamma_Q(\alpha;s,u)$. 
In particular $\Gamma_Q(2l+1;s,u)$ with $l=1,2,3,\dotsc$ and $s\ne0$ 
is a translation and a vertical dilation of the graph of $x^{2l+1}$. 
Unfortunately, however, $\mathcal{Q}_{\alpha}f(s,u)$ 
does not seem to be used in science and technology. 
\par
Finally, 
$\mathcal{R}_{\alpha,\beta}f(s,u)$ is defined by 
\begin{align*}
  \mathcal{R}_{\alpha,\beta}f(s,u)
& =
  \int_{\substack{x\in\mathbb{R} \\ s\lvert{x-c}\rvert^\alpha+u>0}}
  \frac{f\bigl(x,(s\lvert{x-c}\rvert^\alpha+u)^{1/\beta}\bigr)}{(s\lvert{x-c}\rvert^\alpha+u)^{1/\beta}}
  dx
\\
& =
  \int_{\substack{x\in\mathbb{R} \\ s\lvert{x}\rvert^\alpha+u>0}}
  \frac{f\bigl(x+c,(s\lvert{x}\rvert^\alpha+u)^{1/\beta}\bigr)}{(s\lvert{x}\rvert^\alpha+u)^{1/\beta}}
  dx. 
\end{align*}
It is natural to impose $f(x,0)=0$ on $f(x,y)$ 
in order to resolve the singularities at $s\lvert{x}\rvert^\alpha+u=0$. 
Note that 
$$
\frac{f\bigl(-x+c,(s\lvert{-x}\rvert^\alpha+u)^{1/\beta}\bigr)}{(s\lvert{-x}\rvert^\alpha+u)^{1/\beta}}
=
\frac{f\bigl(-x+c,(s\lvert{x}\rvert^\alpha+u)^{1/\beta}\bigr)}{(s\lvert{x}\rvert^\alpha+u)^{1/\beta}}. 
$$
Moreover, if $f(x,y)$ satisfies the symmetry 
$$
f(x+c,y)=f(-x+c,y)=f(x+c,-y), 
$$
$\mathcal{R}_{\alpha,\beta}f(s,u)$ 
can be regarded as the integration of $f$ on a curve 
$$
\Gamma_\mathcal{R}(\alpha,\beta;s,u)
=
\{
(x,y)\in\mathbb{R}^2
\ : \ 
\lvert{y}\rvert^\beta=s\lvert{x-c}\rvert^\alpha+u
\}, 
$$
and slightly different from the standard contour integral on 
$\Gamma_\mathcal{R}(\alpha,\beta;s,u)$. 
In particular $\Gamma_\mathcal{R}(2,2;s,u)$ with $s>0$  
is a hyperbola, and 
$\mathcal{R}_{2,2}f(s,u)$ 
is called the hyperbolic Radon transform of $f$ in seismology. 
\par
Here we recall the mathematical background of our transforms. 
In the early 1980s, 
Cormack introduced the Radon transform of a family of plane curves and 
studied the basic properties in his pioneering works 
\cite{Cormack1981} and \cite{Cormack1982}. 
In 1998 Denecker, van Overloop and Sommen 
in \cite{Denecker1998} studied the parabolic Radon transform 
without fixed axis, in particular, the support theorem, 
higher dimensional generalization and etc. 
More than a decade later, Jollivet, Nguyen and Truong in \cite{Jollivet2011} 
studied some properties of the parabolic Radon transform with fixed axis, 
which is the exact contour integration. 
Recently, Moon established the inversion of 
the parabolic Radon transform $\mathcal{P}_2$ 
and the inversion of the hyperbolic Radon transform 
$\mathcal{R}_{2,2}$ respectively in his interesting paper \cite{Moon2016}. 
He introduced some change of variables in $(x,y)\in\mathbb{R}^2$ 
so that the Radon transform over a family of plane curves became 
the X-ray transform, that is, the Radon transform over a family of lines. 
Functions considered in \cite{Moon2016} 
are supposed to be compactly supported in 
$\mathbb{R}\setminus\{c\} \times \mathbb{R}$. 
More recently, replacing $x^2$ by some function $\varphi(x)$ 
in the parabolic Radon transform, 
Ustaoglu developed Moon's idea 
to study the inversion of more general Radon transforms 
on the plane in \cite{Ustaoglu2017}. 
He considered the inversion formulas for functions 
which are compactly supported in $\mathbb{R}\setminus\{c\} \times \mathbb{R}$. 
This means that such a function is identically equal to zero near $x=c$, 
and this is a strong vanishing condition at $x=c$. 
Moreover, we remark that the inversion formulas in 
\cite{Moon2016} and \cite{Ustaoglu2017} 
are proved by establishing the Fourier slice theorem and by using the Fourier inversion formula.   
\par
Now we also recall the scientific background of 
the parabolic Radon transform $\mathcal{P}_2$ 
and the hyperbolic transform $\mathcal{R}_{2,2}$. 
These are used for processing of 
the data of observation of seismic waves. 
In our notation $x$ means the distance of the source and 
$y$ means the travel time of the wave. 
In an early stage, 
the hyperbolic Radon transform seemed to have been used in seismology. 
In 1986, Hampton proposed to replace hyperbolas by parabolas 
and introduced the parabolic Radon transform in \cite{Hampson1986}. 
See also expository papers due to seismologists 
\cite{Maeland1998} for the parabolic Radon transform and 
\cite{Bickel2000} for the hyperbolic Radon transform respectively.   
\par
The purpose of the present paper is 
to establish the inversion formulas for 
$\mathcal{P}_\alpha$, $\mathcal{Q}_\alpha$ and $\mathcal{R}_{\alpha,\beta}$, 
which are the generalization of 
the parabolic Radon transform and the hyperbolic Radon transform 
with some symmetry of curves. 
In particular, 
we shall prove the inversion formulas elementarily 
under weaker conditions on the function $f(x,y)$. 
More precisely, we impose a finite order vanishing condition on $f(x,y)$ at $x=c$.  
Here we introduce some function spaces to state our results. 
These function spaces consists of Schwartz functions on $\mathbb{R}^2$ 
satisfying some symmetries and vanishing conditions. 
The symmetries are natural for our transforms, 
and the vanishing conditions are used 
for justifying the change of variables 
for the reduction to the X-ray transform. 
For the sake of simplicity, all the definitions, 
lemmas and theorems are stated in the order for 
$\mathcal{Q}_\alpha$, $\mathcal{P}_\alpha$ and $\mathcal{R}_{\alpha,\beta}$. 
We denote the set of all Schwartz functions on $\mathbb{R}^2$ by 
$\mathscr{S}(\mathbb{R}^2)$, that is, we say that $f(x,y) \in \mathscr{S}(\mathbb{R}^2)$ if 
$f(x,y) \in C^\infty(\mathbb{R}^2)$ and 
\begin{equation}
\sup_{(x,y)\in\mathbb{R}^2}
\left\{
(1+\lvert{x}\rvert+\lvert{y}\rvert)^N
\left\lvert
\frac{\partial^{k+l}f}{\partial x^k \partial y^l}
(x,y)
\right\rvert
\right\}
<\infty
\label{equation:schwartz} 
\end{equation}  
for any $k,l,N=0,1,2,\dotsc$. 
\begin{definition}
\label{definition:shwartzQ} 
Fix $c\in\mathbb{R}$. Let $m$ be a nonnegative integer. 
We define functions spaces 
$\mathscr{S}_{c,m}(\mathbb{R}^2)$, 
$\mathscr{S}_{c,m}^\mathcal{P}(\mathbb{R}^2)$ 
and 
$\mathscr{S}_{c,m}^\mathcal{R}(\mathbb{R}^2)$ 
by 
\begin{align*}
  \mathscr{S}_{c,m}(\mathbb{R}^2)
& =
  \left\{
  f\in\mathscr{S}(\mathbb{R}^2)
  \ : \ 
  \frac{\partial^k f}{\partial x^k}(c,y)=0
  \quad\text{for}\quad
  k=0,1,\dotsc,m
  \right\}, 
\\
  \mathscr{S}_{c,m}^\mathcal{P}(\mathbb{R}^2)
& =
  \{
  f\in\mathscr{S}_{c,m}(\mathbb{R}^2)
  \ : \ 
  f(x+c,y)=f(-x+c,y)\},
\\ 
  \mathscr{S}_{c,m}^\mathcal{R}(\mathbb{R}^2)
& =
  \{
  f\in\mathscr{S}_{c,m}^\mathcal{P}(\mathbb{R}^2)
  \ : \ 
  f(x+c,y)=f(x+c,-y),\ f(x+c,0)=0\}.
\end{align*}
\end{definition}
Recall $\alpha>1$ and $\beta>1$. 
Throughout the present paper, 
we here assume that the vanishing order $m$ at $x=c$ 
satisfies $m \geqq \alpha-2$.  
This condition guarantees the existence of finite boundary value 
at $x \rightarrow c\pm0$ after the reduction to X-ray transform. 
Our main results are the following. 
\begin{theorem}
\label{theorem:main}
Let $c\in\mathbb{R}$ and let $\alpha,\beta>1$. 
Suppose that $m$ is a nonnegative integer satisfying $m\geqq\alpha-2$. 
\begin{itemize}
\item[{\rm (i)}] 
For any $f\in\mathscr{S}_{c,m}(\mathbb{R}^2)$, 
\begin{equation}
f(x,y)
=
\frac{\alpha\lvert{x-c}\rvert^{\alpha-1}}{2\pi^2}
\int_{-\infty}^\infty
\left(
\operatorname{vp}
\int_{-\infty}^\infty
\frac{\partial_u \mathcal{Q}_{\alpha}f(s,u)}{y-s(x-c)\lvert{x-c}\rvert^{\alpha-1}-u}
du
\right)
ds.
\label{equation:inversion1} 
\end{equation}
\item[{\rm (ii)}] 
For any $f\in\mathscr{S}_{c,m}^\mathcal{P}(\mathbb{R}^2)$, 
\begin{equation}
f(x,y)
=
\frac{\alpha\lvert{x-c}\rvert^{\alpha-1}}{4\pi^2}
\int_{-\infty}^\infty
\left(
\operatorname{vp}
\int_{-\infty}^\infty
\frac{\partial_u \mathcal{P}_{\alpha}f(s,u)}{y-s\lvert{x-c}\rvert^\alpha-u}
du
\right)
ds.
\label{equation:inversion2} 
\end{equation}
\item[{\rm (iii)}]  
For any $f\in\mathscr{S}_{c,m}^\mathcal{R}(\mathbb{R}^2)$, 
\begin{equation}
f(x,y)
=
\frac{\alpha\lvert{x-c}\rvert^{\alpha-1}\lvert{y}\rvert}{4\pi^2}
\int_{-\infty}^\infty
\left(
\operatorname{vp}
\int_{-\infty}^\infty
\frac{\partial_u \mathcal{R}_{\alpha,\beta}f(s,u)}{\lvert{y}\rvert^\beta-s\lvert{x-c}\rvert^\alpha-u}
du
\right)
ds.
\label{equation:inversion3} 
\end{equation}
\end{itemize}
Here $\partial_u=\partial/\partial u$ and 
$\operatorname{vp}$ denotes the Cauchy principal value for improper integrals.  
\end{theorem}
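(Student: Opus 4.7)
The plan is to reduce each of the three transforms to the planar X-ray transform $\mathcal{X}g(s,u) = \int_{\mathbb{R}} g(\xi, s\xi+u)\,d\xi$ by an appropriate change of variables in the first coordinate, invoke its Hilbert-transform inversion
\begin{equation*}
g(\xi,y) = \frac{1}{2\pi^2} \int_{-\infty}^\infty \operatorname{vp}\!\int_{-\infty}^\infty \frac{\partial_u \mathcal{X}g(s,u)}{y-s\xi-u}\,du\,ds,
\end{equation*}
and pull back the result. The symmetry and vanishing hypotheses defining $\mathscr{S}_{c,m}$, $\mathscr{S}_{c,m}^\mathcal{P}$, $\mathscr{S}_{c,m}^\mathcal{R}$ are tailored exactly to this reduction: they are what makes the pulled-back function an admissible input for X-ray inversion, with the standing hypothesis $m\geq\alpha-2$ providing finite boundary values at the critical axis $x=c$.

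For (i), I set $\xi = (x-c)\lvert x-c\rvert^{\alpha-1}$, a $C^1$-diffeomorphism of $\mathbb{R}$ with inverse $x = c + \sgn(\xi)\lvert \xi\rvert^{1/\alpha}$ and Jacobian $dx = d\xi/(\alpha\lvert \xi\rvert^{(\alpha-1)/\alpha})$. The curve $y = s(x-c)\lvert x-c\rvert^{\alpha-1}+u$ then becomes the line $y=s\xi+u$, and putting $g(\xi,y) = f(c+\sgn(\xi)\lvert \xi\rvert^{1/\alpha},y)/(\alpha\lvert \xi\rvert^{(\alpha-1)/\alpha})$ gives $\mathcal{Q}_\alpha f(s,u) = \mathcal{X}g(s,u)$. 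A Taylor expansion of $f$ in its first argument at $x=c$ to order $m+1$, combined with $m\geq\alpha-2$, makes $g$ bounded across $\xi=0$ and rapidly decaying; applying the X-ray inversion at $\xi = (x-c)\lvert x-c\rvert^{\alpha-1}$ and multiplying by the Jacobian factor $\alpha\lvert x-c\rvert^{\alpha-1}$ then recovers $f(x,y)$, which is (i). For (ii), the evenness of $f$ in $x-c$ lets me restrict to $x>c$, substitute $\xi=(x-c)^\alpha$, and set $G(\xi,y) = 2f(c+\xi^{1/\alpha},y)/(\alpha\xi^{(\alpha-1)/\alpha})$ on $\xi>0$, extended by zero to $\xi\leq 0$; then $\mathcal{P}_\alpha f = \mathcal{X}G$, and the X-ray inversion applied at $\xi=\lvert x-c\rvert^\alpha$ yields (ii), the extra factor $1/2$ coming from the factor $2$ in $G$. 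Part (iii) reduces to (ii) via the further substitution $\eta = \lvert y\rvert^\beta$: putting $h(x,\eta) = f(x,\eta^{1/\beta})/\eta^{1/\beta}$ for $\eta>0$ and $h(x,\eta)=0$ otherwise, we get $\mathcal{R}_{\alpha,\beta}f = \mathcal{P}_\alpha h$; the conditions in $\mathscr{S}_{c,m}^\mathcal{R}$ make $h$ even in $x-c$ and vanishing to order $m$ at $x=c$, while $f(x+c,0)=0$ forces $h(x,\eta)\to 0$ as $\eta\to 0^+$. Applying (ii) to $h$ and replacing $\eta$ by $\lvert y\rvert^\beta$ (using that $f$ is even in $y$) gives (iii).

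The main obstacle is not the changes of variables themselves but the rigorous deployment of the X-ray inversion formula for the pulled-back functions $g$, $G$, $h$, which in general fail to be Schwartz: they have only H\"older-type regularity near $\xi=0$, and the cutoffs in $G$ and $h$ additionally introduce a jump discontinuity across $\xi=0$. The hypothesis $m\geq\alpha-2$ is calibrated precisely to keep these functions bounded so that the inner Cauchy principal value is well-defined and the outer $s$-integral converges, presumably after an integration by parts in $u$. The bulk of the technical work therefore goes into proving an X-ray inversion formula valid in a function class that contains these pulled-back objects, and then verifying the interchange of integrations and the pointwise recovery of $f(x,y)$ at every point, including the degenerate axis $x=c$, where the prefactor $\lvert x-c\rvert^{\alpha-1}$ vanishes in balance with the hypothesis $f(c,y)=0$.
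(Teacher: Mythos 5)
Your proposal is correct and follows essentially the same route as the paper: the substitution $\xi=(x-c)\lvert x-c\rvert^{\alpha-1}$ producing the auxiliary functions (your $g$, $G$ are the paper's $F_\alpha$, $F_\alpha^{\mathcal{P}}$), the Taylor expansion to order $m+1$ with $m\geqq\alpha-2$ to control the behaviour at $\xi=0$, and the appeal to the X-ray inversion formula for bounded, rapidly decaying (non-Schwartz) inputs, which the paper likewise handles by citing Helgason rather than reproving. The only cosmetic differences are that you state the X-ray inversion directly in slope--intercept coordinates while the paper passes explicitly through $(\theta,t)=(\operatorname{Arccot}(-s),u/\sqrt{1+s^2})$ with its Jacobian, and that you factor (iii) through (ii) via the intermediate function $h$ whereas the paper performs both substitutions at once with $F_{\alpha,\beta}^{\mathcal{R}}$ (which sidesteps the fact that $h$ is not literally in $\mathscr{S}_{c,m}^{\mathcal{P}}(\mathbb{R}^2)$, a point you correctly flag).
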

Note that \eqref{equation:inversion1} and \eqref{equation:inversion2} were proved 
in \cite{Ustaoglu2017} 
for compactly supported smooth functions in $\mathbb{R}\setminus\{c\}\times\mathbb{R}$. 
Moreover, note that 
\eqref{equation:inversion2} for $\alpha=2$ 
and \eqref{equation:inversion3} for $\alpha=\beta=2$ 
were obtained in \cite{Moon2016}, 
but the vanishing conditions on $f(x,y)$ are quite strong. 
\par
The proof of Theorem~\ref{theorem:main} depends on 
Moon's idea of the reduction to X-ray transform in \cite{Moon2016}. 
We apply some change of variables to the inversion formula of the X-ray transform, 
we can obtain our inversion formulas. The author believes that this idea makes the proof simple 
in comparison with the proof via Fourier slice theorem for these transforms.   
We prepare some lemmas in Section~\ref{section:preliminaries}, 
and prove Theorem~\ref{theorem:main} in Section~\ref{section:proofs}.  
\section{Preliminaries}
\label{section:preliminaries}
We begin with the X-ray transform on $\mathbb{R}^2$. 
\begin{definition}
\label{definition:X-ray}
The X-ray transform of $f\in\mathscr{S}(\mathbb{R}^2)$ is defined by 
$$
\mathcal{X}f(\theta,t)
=
\int_{-\infty}^\infty
f(t\cos\theta-\sigma\sin\theta,t\sin\theta+\sigma\cos\theta)
d\sigma
$$
for $(\theta,t)\in\mathbb{R}^2$. 
Note that $\mathcal{X}f(\theta\pm\pi,-t)=\mathcal{X}f(\theta,t)$.  
\end{definition}
The inversion formula of the X-ray transform is well-known as follows.  
\begin{theorem}
\label{theorem:inversionX}
For $f\in\mathscr{S}(\mathbb{R}^2)$, 
$$
f(x,y)
=
\frac{1}{2\pi^2}
\int_0^\pi
\left(
\operatorname{vp}
\int_{-\infty}^\infty
\frac{\partial_t \mathcal{X}f(\theta,t)}{x\cos\theta+y\sin\theta-t}
dt
\right)
d\theta.
$$
\end{theorem}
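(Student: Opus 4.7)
The plan is to derive the formula from the Fourier slice theorem combined with the two-dimensional Fourier inversion formula in polar coordinates, which is the classical route to the X-ray transform inversion in $\mathbb{R}^2$. Denote the one-dimensional Fourier transform in $t$ by $\widetilde{g}(\tau)=\int g(t)e^{-it\tau}\,dt$ and the two-dimensional one by $\widehat{f}(\xi)=\int\!\!\int f(x,y)e^{-i(x\xi_1+y\xi_2)}\,dx\,dy$.

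First I would establish the Fourier slice identity
\begin{equation*}
\widetilde{\mathcal{X}f}(\theta,\tau)=\widehat{f}(\tau\cos\theta,\tau\sin\theta)
\end{equation*}
by substituting the definition of $\mathcal{X}f(\theta,t)$ into the one-dimensional Fourier integral in $t$ and recognizing the orthogonal change of coordinates $(t,\sigma)\mapsto(t\cos\theta-\sigma\sin\theta,t\sin\theta+\sigma\cos\theta)$. Since $f\in\mathscr{S}(\mathbb{R}^2)$, all integrals are absolutely convergent and Fubini applies without difficulty. Next, starting from $f(x,y)=(2\pi)^{-2}\int_{\mathbb{R}^2}\widehat{f}(\xi)e^{i(x\xi_1+y\xi_2)}\,d\xi$ and switching to polar coordinates $\xi=\tau(\cos\theta,\sin\theta)$ with $\tau\geqq0$, $\theta\in[0,2\pi)$, I would use the symmetry $(\theta,\tau)\mapsto(\theta+\pi,-\tau)$ to fold the $\theta$-range onto $[0,\pi]$ at the cost of extending $\tau$ to all of $\mathbb{R}$; the Jacobian $\tau$ then becomes $|\tau|$. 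Applying the slice identity yields
\begin{equation*}
f(x,y)=\frac{1}{(2\pi)^2}\int_0^\pi\int_{-\infty}^\infty |\tau|\,\widetilde{\mathcal{X}f}(\theta,\tau)\,e^{i\tau(x\cos\theta+y\sin\theta)}\,d\tau\,d\theta.
\end{equation*}

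The final step is to recognize $|\tau|$ as the Fourier multiplier symbol of $H\circ\partial_t$, where $H$ is the Hilbert transform. Factoring $|\tau|=-i\sgn(\tau)\cdot i\tau$, the factor $i\tau$ corresponds on the physical side to $\partial_t$, so $|\tau|\widetilde{\mathcal{X}f}(\theta,\tau)=-i\sgn(\tau)\,\widetilde{\partial_t\mathcal{X}f}(\theta,\tau)$. Using the distributional identity $\mathcal{F}^{-1}[-i\sgn(\tau)\widetilde{h}(\tau)](t)=\pi^{-1}\operatorname{vp}\int h(s)/(t-s)\,ds$, the inner $\tau$-integral collapses, evaluated at $t=x\cos\theta+y\sin\theta$, to $\pi^{-1}\operatorname{vp}\int\partial_t\mathcal{X}f(\theta,t)/(x\cos\theta+y\sin\theta-t)\,dt$, and combining with the overall prefactor gives exactly the stated formula. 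The only step requiring real care is the principal-value identification of $\mathcal{F}^{-1}[\sgn\tau]$; everything else is routine because $\mathcal{X}f(\theta,\cdot)$ inherits rapid decay in $t$ from $f\in\mathscr{S}(\mathbb{R}^2)$, so no delicate convergence issue arises in the Fourier or Fubini manipulations.
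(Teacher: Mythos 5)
Your derivation is correct: the Fourier slice identity, the polar-coordinate folding with Jacobian $\lvert\tau\rvert$, and the identification of $\lvert\tau\rvert$ with $-i\sgn(\tau)\cdot i\tau$ (Hilbert transform composed with $\partial_t$) combine with the right constants to give exactly $\tfrac{1}{2\pi^2}$. The paper offers no proof of this theorem, quoting it as classical with a reference to Helgason, and your argument is precisely the standard filtered-backprojection derivation found there, so nothing further is needed.
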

It is very important in the present paper that 
the X-ray transform and its inversion formula are valid also for 
a smooth function $f(x,y)$ satisfying 
$f(x,y)=O\bigl((1+\lvert{x}\rvert+\lvert{y}\rvert)^{-d}\bigr)$ 
with some $d>1$, 
compactly supported distributions, 
rapidly decaying Lebesgue measurable functions and etc.  
See, e.g., Chapter~1 of Helgason's textbook \cite{Helgason2011} for the detail. 
\par
Secondly, we give a lemma to make full use of the vanishing conditions. 
\begin{lemma}
\label{theorem:vanishing}
\quad
\begin{itemize}
\item[{\rm (i)}] 
For $f\in\mathscr{S}_{c,m}(\mathbb{R}^2)$, 
\begin{equation}
f(x+c,y)
=
\frac{x^{m+1}}{m!}
\int_0^1
(1-t)^m
\frac{\partial^{m+1} f}{\partial x^{m+1}}(tx+c,y)
dt.
\label{equation:vanishing1} 
\end{equation}
\item[{\rm (ii)}] 
For $f\in\mathscr{S}_{c,m}^\mathcal{R}(\mathbb{R}^2)$, 
\begin{equation}
\frac{\partial^l f}{\partial x^l}(x+c,y)
=
y
\int_0^1
\frac{\partial^{l+1} f}{\partial x^l \partial y}(x+c,\tau y)
d\tau,
\quad
l=0,1,2,\dotsc,
\label{equation:vanishing2} 
\end{equation}
\begin{equation}
f(x+c,y)
=
\frac{x^{m+1}y}{m!}
\int_0^1
\int_0^1
(1-t)^m
\frac{\partial^{m+2} f}{\partial x^{m+1} \partial y}(tx+c,\tau y)
dtd\tau.
\label{equation:vanishing3} 
\end{equation}

\end{itemize}
\end{lemma}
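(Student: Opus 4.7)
The plan is to prove all three identities by elementary Taylor-type arguments with integral remainder, exploiting the vanishing conditions that define the function spaces. Nothing deeper than the fundamental theorem of calculus is needed; the only point to be careful about is chaining the two one-dimensional expansions in part (iii) so that the iterated integral comes out in the stated form.

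For part (i), I would fix $y$ and set $g(x) = f(x+c,y)$. The assumption $f \in \mathscr{S}_{c,m}(\mathbb{R}^2)$ means exactly that $g^{(k)}(0) = 0$ for $k = 0, 1, \dots, m$. Taylor's theorem with integral remainder therefore gives
\[
g(x) = \frac{1}{m!}\int_0^x (x-\xi)^m g^{(m+1)}(\xi)\, d\xi,
\]
and the change of variables $\xi = tx$ converts this into \eqref{equation:vanishing1}. The Schwartz class membership makes all manipulations legitimate.

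For part (ii), I would first observe that the symmetry $f(x+c,y) = f(x+c,-y)$ combined with $f(x+c,0) = 0$ gives $(\partial^l f/\partial x^l)(x+c,0) = 0$ for every nonnegative integer $l$, simply by differentiating in $x$ the identity $f(x+c,0) = 0$. With $h(y) = (\partial^l f/\partial x^l)(x+c,y)$ we have $h(0) = 0$, so the fundamental theorem of calculus and the substitution $\eta = \tau y$ yield \eqref{equation:vanishing2} immediately.

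For part (iii), I would combine the two previous identities. Apply \eqref{equation:vanishing1} to obtain
\[
f(x+c,y) = \frac{x^{m+1}}{m!}\int_0^1 (1-t)^m \frac{\partial^{m+1} f}{\partial x^{m+1}}(tx+c, y)\, dt,
\]
then apply \eqref{equation:vanishing2} with $l = m+1$ to the integrand, producing an extra factor $y$ and an inner integral in $\tau$. Since $f \in \mathscr{S}(\mathbb{R}^2)$, the integrand $(\partial^{m+2} f/\partial x^{m+1}\partial y)(tx+c,\tau y)$ is bounded on $[0,1]^2$, so Fubini applies and interchanging the order of integration is painless; this gives \eqref{equation:vanishing3}. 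The only mild subtlety in the whole argument is verifying that every iterated $x$-derivative of $f(x+c,0)$ vanishes in step (ii), but this follows at once from $f(x+c,0) \equiv 0$ as a function of $x$, so there is really no substantive obstacle.
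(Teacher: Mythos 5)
Your proposal is correct and follows essentially the same route as the paper: Taylor's formula with integral remainder in $x$ for (i), the fundamental theorem of calculus in $y$ using $\partial^l f/\partial x^l(x+c,0)=0$ for (ii), and substitution of (ii) with $l=m+1$ into (i) for (iii). The only cosmetic difference is that you invoke the $y$-symmetry in step (ii), which is unnecessary since $f(x+c,0)=0$ is already part of the definition of $\mathscr{S}_{c,m}^\mathcal{R}(\mathbb{R}^2)$.
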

\begin{proof}
Let $f\in\mathscr{S}_{c,m}(\mathbb{R}^2)$. 
Then $\partial^kf/\partial x^k(c,y)=0$ for $k=0,1,\dotsc,m$. 
Taylor's formula in $x$ gives 
\begin{align*}
  f(x+c,y)
& =
  \sum_{k=0}^m
  \frac{x^k}{k!}
  \frac{\partial^k f}{\partial x^k}(c,y)
  +
  \frac{x^{m+1}}{m!}
  \int_0^1
  (1-t)^m
  \frac{\partial^{m+1} f}{\partial x^{m+1}}
  (tx+c,y)
  dt
\\
& =
  \frac{x^{m+1}}{m!}
  \int_0^1
  (1-t)^m
  \frac{\partial^{m+1} f}{\partial x^{m+1}}
  (tx+c,y)
  dt,
\end{align*}
which is \eqref{equation:vanishing1}. 
\par
Let $f\in\mathscr{S}_{c,m}^\mathcal{R}(\mathbb{R}^2)$. 
Then \eqref{equation:vanishing1} holds since 
$\mathscr{S}_{c,m}^\mathcal{R}(\mathbb{R}^2)\subset\mathscr{S}_{c,m}(\mathbb{R}^2)$. 
Recall $f(x+c,0)=0$. 
Then $\partial^l f/\partial x^l(x+c,0)=0$ for $l=0,1,2,\dotsc$. 
Applying the mean value theorem to 
$\partial^l f/\partial x^l(x+c,y)$ in $y$, 
we have 
$$
\frac{\partial^l f}{\partial x^l}(x+c,y)
=
\frac{\partial^l f}{\partial x^l}(x+c,y)
-
\frac{\partial^l f}{\partial x^l}(x+c,0)
=
y
\int_0^1
\frac{\partial^{l+1} f}{\partial x^l \partial y}(x+c,\tau y)
d\tau, 
$$
which is \eqref{equation:vanishing2}. 
Applying this with $l={m+1}$ to \eqref{equation:vanishing1}, 
we obtain \eqref{equation:vanishing3}. 
This completes the proof.  
\end{proof}
We make use of the change of variable 
$x=\xi\lvert\xi\rvert^{-1+1/\alpha}=\operatorname{sgn}(\xi)\lvert\xi\rvert^{1/\alpha}$ for $\xi\ne0$ in order to reduce our transforms to the X-ray transform. 
Since $\lvert{x}\rvert=\lvert\xi\rvert^{1/\alpha}$, 
we have $sx\lvert{x}\rvert^{\alpha-1}+u=s\xi+u$ for $\xi\ne0$ 
and in particular $s\lvert{x}\rvert^\alpha+u=s\xi+u$ for $\xi>0$. 
For this purpose, we introduce new functions of 
$(\xi,\eta)\in\mathbb{R}^2$ defined by $f(x,y)$ as follows: 
\begin{align*}
  F_\alpha(\xi,\eta)
& =
  \dfrac{f(\xi\lvert\xi\rvert^{-1+1/\alpha}+c,\eta)}{\alpha\lvert\xi\rvert^{(\alpha-1)/\alpha}}
  \qquad
  (\xi\ne0),
\\
  F_\alpha^\mathcal{P}(\xi,\eta)
& =
  \begin{cases}
  2F_\alpha(\xi,\eta) &\quad (\xi>0),
  \\
  0 &\quad (\text{otherwise}),   
  \end{cases}
  =
  \begin{cases}
  \dfrac{2f(\xi^{1/\alpha}+c,\eta)}{\alpha\xi^{(\alpha-1)/\alpha}}
  & \quad (\xi>0), 
  \\
  0 &\quad (\text{otherwise}),     
  \end{cases}
\\
  F_{\alpha,\beta}^\mathcal{R}(\xi,\eta)
& =  
  \begin{cases}   
  \dfrac{2f(\xi^{1/\alpha}+c,\eta^{1/\beta})}{\alpha\xi^{(\alpha-1)/\alpha}\eta^{1/\beta}}
  & \quad (\xi>0, \eta>0), 
  \\
  0 &\quad (\text{otherwise}).    
  \end{cases}
\end{align*}
We consider the properties of these new functions. 
To avoid the confusion, 
we split our statements into three lemmas. 
\par
Firstly, 
the properties $F_\alpha$ for $f\in\mathscr{S}_{c,m}(\mathbb{R}^2)$ 
are the following.  
\begin{lemma}
\label{theorem:change1}
For $f\in\mathscr{S}_{c,m}(\mathbb{R}^2)$, 
$F_\alpha(\xi,\eta)$ satisfies 
\begin{equation}
f(x,y)
=
\alpha\lvert{x-c}\rvert^{\alpha-1}
F_\alpha\bigl((x-c)\lvert{x-c}\rvert^{\alpha-1},y\bigr), 
\label{equation:inverse1} 
\end{equation}
and for any $N>0$, there exists a constant $C_N>0$ such that 
\begin{equation}
\lvert{F_\alpha(\xi,\eta)}\rvert
\leqq
C_N(1+\lvert\xi\rvert+\lvert\eta\rvert)^{-N}.
\label{equation:decay1} 
\end{equation}
Moreover, when $\pm\xi\downarrow0$, 
\begin{equation}
F_\alpha(\xi,s\xi+u)
\rightarrow
\begin{cases}
0 &\quad (m>\alpha-2),
\\
\dfrac{(\pm1)^{m+1}}{(m+2)!}
\cdot
\dfrac{\partial^{m+1} f}{\partial x^{m+1}}(c,u)
&\quad (m=\alpha-2). 
\end{cases} 
\label{equation:limit1}
\end{equation}
\end{lemma}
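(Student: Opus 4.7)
The plan is to derive all three claims by a single common device, namely expressing $f$ near $x=c$ through Taylor's formula of Lemma~\ref{theorem:vanishing}(i), which converts the apparent singularity of $F_\alpha$ at $\xi=0$ produced by the denominator $\alpha|\xi|^{(\alpha-1)/\alpha}$ into bounded or rapidly decaying behaviour thanks to the hypothesis $m\geq\alpha-2$.

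First I verify \eqref{equation:inverse1} by direct substitution: setting $\xi=(x-c)|x-c|^{\alpha-1}$ gives $|\xi|=|x-c|^{\alpha}$, hence $|\xi|^{(\alpha-1)/\alpha}=|x-c|^{\alpha-1}$ and $\xi|\xi|^{-1+1/\alpha}=\operatorname{sgn}(x-c)|x-c|=x-c$, so the definition of $F_\alpha$ collapses to \eqref{equation:inverse1}.

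For the decay \eqref{equation:decay1} I split at $|\xi|=1$. On $|\xi|\geq 1$ I use the definition directly: the denominator satisfies $|\xi|^{(\alpha-1)/\alpha}\geq 1$, and since $f\in\mathscr{S}(\mathbb{R}^2)$, $|f(\operatorname{sgn}(\xi)|\xi|^{1/\alpha}+c,\eta)|\leq C_K(1+|\xi|^{1/\alpha}+|\eta|)^{-K}$ for any $K$. Using the elementary inequality $(1+a+b)^{-K}\leq (1+a)^{-K/2}(1+b)^{-K/2}$ together with $(1+|\xi|^{1/\alpha})^{-\alpha N}\leq C(1+|\xi|)^{-N}$ on $|\xi|\geq 1$, and choosing $K$ sufficiently large depending on $N$ and $\alpha$, yields the desired $(1+|\xi|+|\eta|)^{-N}$ bound. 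On $|\xi|\leq 1$ I apply Lemma~\ref{theorem:vanishing}(i) at the point $\xi|\xi|^{-1+1/\alpha}+c$ to obtain
\begin{equation*}
F_\alpha(\xi,\eta)
=
\frac{\operatorname{sgn}(\xi)^{m+1}|\xi|^{(m+2-\alpha)/\alpha}}{\alpha\,m!}
\int_0^1(1-t)^m
\frac{\partial^{m+1}f}{\partial x^{m+1}}\bigl(t\operatorname{sgn}(\xi)|\xi|^{1/\alpha}+c,\eta\bigr)\,dt.
\end{equation*}
The exponent $(m+2-\alpha)/\alpha$ is nonnegative by hypothesis, so the prefactor remains bounded; since $\partial_x^{m+1}f\in\mathscr{S}$ and $t|\xi|^{1/\alpha}\leq 1$, the integrand decays rapidly in $\eta$, which on the bounded set $|\xi|\leq 1$ is equivalent to the desired decay in both variables.

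The boundary behaviour \eqref{equation:limit1} is read off from the same Taylor identity evaluated at $\eta=s\xi+u$. The integrand converges uniformly in $t\in[0,1]$ as $\pm\xi\downarrow 0$ to $\partial_x^{m+1}f(c,u)$ by continuity, while the prefactor $|\xi|^{(m+2-\alpha)/\alpha}$ tends to $0$ if $m>\alpha-2$ and equals $1$ if $m=\alpha-2$. In the critical case, using $\alpha=m+2$ and $\int_0^1(1-t)^m\,dt=1/(m+1)$ gives $\alpha\,m!\,(m+1)=(m+2)!$, which reproduces the stated limit with the correct sign $(\pm1)^{m+1}$ coming from $\operatorname{sgn}(\xi)^{m+1}$. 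The main obstacle throughout is taming the $|\xi|^{-(\alpha-1)/\alpha}$ singularity of the raw definition; this is precisely what the vanishing condition $m\geq\alpha-2$ accomplishes, and in the borderline case $m=\alpha-2$ it forces the finite nonzero boundary value that will later be essential when reducing to the X-ray transform.
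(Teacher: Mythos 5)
Your proposal is correct and follows essentially the same route as the paper's own proof: verify \eqref{equation:inverse1} by direct substitution, obtain \eqref{equation:decay1} by splitting at $\lvert\xi\rvert=1$ and invoking the Taylor representation \eqref{equation:vanishing1} to absorb the factor $\lvert\xi\rvert^{-(\alpha-1)/\alpha}$ near $\xi=0$, and read \eqref{equation:limit1} off the same identity. You merely spell out more explicitly the elementary inequalities behind the decay estimate and the constant count $\alpha\,m!\,(m+1)=(m+2)!$ in the critical case, both of which check out.
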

Here we remark that we interpret the right hand side of \eqref{equation:inverse1} at $x=c$ 
as zero since $\alpha>1$ and \eqref{equation:limit1}.
\begin{proof}[{Proof of Lemma~\ref{theorem:change1}}] 
Suppose that $f\in\mathscr{S}_{c,m}(\mathbb{R}^2)$. 
We can check \eqref{equation:inverse1} since 
$x\lvert{x}\rvert^{\alpha-1}=\xi$ 
and 
$\lvert{x}\rvert^\alpha=\lvert{\xi}\rvert$. 
Combining the definition of $F_\alpha$ and \eqref{equation:schwartz}, 
we obtain the decay estimate \eqref{equation:decay1} 
for $\lvert\xi\rvert>1$ and $\eta\in\mathbb{R}$. 
By using \eqref{equation:vanishing1}, we have for $\xi\ne0$ 
\begin{align*}
  F_\alpha(\xi,\eta)
& =
  \frac{f(\xi\lvert\xi\rvert^{-1+1/\alpha}+c,\eta)}{\alpha\lvert\xi\rvert^{(\alpha-1)/\alpha}}
\\
& =
  \frac{\bigl(\operatorname{sgn}(\xi)\bigr)^{m+1} \lvert\xi\rvert^{(m+2-\alpha)/\alpha}}{\alpha m!}
\\
& \times
  \int_0^1
  (1-t)^m
  \frac{\partial^{m+1} f}{\partial x^{m+1}}(t\xi\lvert\xi\rvert^{-1+1/\alpha}+c,\eta)
  dt. 
\end{align*}
By using this and \eqref{equation:schwartz}, 
we have $F_\alpha(\xi,\eta)=O\bigl((1+\lvert\eta\rvert)^{-N}\bigr)$ 
uniformly in $0<\lvert\xi\rvert\leqq1$ for any $N>0$ and $\eta\in\mathbb{R}$. 
Thus we proved \eqref{equation:decay1}. 
Using the above again, we have 
\begin{align*}
  F_\alpha(\xi,s\xi+u)
& =
  \frac{\bigl(\operatorname{sgn}(\xi)\bigr)^{m+1} \lvert\xi\rvert^{(m+2-\alpha)/\alpha}}{\alpha m!}
\\
& \times 
  \int_0^1
  (1-t)^m
  \frac{\partial^{m+1} f}{\partial x^{m+1}}(t\xi\lvert\xi\rvert^{-1+1/\alpha}+c,s\xi+u)
  dt. 
\end{align*}
This implies \eqref{equation:limit1} immediately.  
\end{proof}
Secondly, 
the properties $F_\alpha^\mathcal{P}$ for 
$f\in\mathscr{S}_{c,m}^\mathcal{P}(\mathbb{R}^2)$ 
are the following.  
\begin{lemma}
\label{theorem:change2}
For $f\in\mathscr{S}_{c,m}^\mathcal{P}(\mathbb{R}^2)$, 
$F_\alpha^\mathcal{P}(\xi,\eta)$ satisfies 
\begin{equation}
f(x,y)
=
\frac{\alpha}{2}
\lvert{x-c}\rvert^{\alpha-1}
F_\alpha^\mathcal{P}\bigl(\lvert{x-c}\rvert^\alpha,y\bigr), 
\label{equation:inverse2} 
\end{equation}
and for any $N>0$, there exists a constant $C_N>0$ such that 
\begin{equation}
\lvert{F_\alpha^\mathcal{P}(\xi,\eta)}\rvert
\leqq
C_N(1+\lvert\xi\rvert+\lvert\eta\rvert)^{-N}.
\label{equation:decay2} 
\end{equation}
Moreover, when $\xi\uparrow0$, 
$F_\alpha^\mathcal{P}(\xi,s\xi+u)\rightarrow0$, 
and when $\xi\downarrow0$, 
\begin{equation}
F_\alpha^\mathcal{P}(\xi,s\xi+u)
\rightarrow
\begin{cases}
0 &\quad (m>\alpha-2),
\\
\dfrac{2}{(m+2)!}
\cdot
\dfrac{\partial^{m+1} f}{\partial x^{m+1}}(c,u)
&\quad (m=\alpha-2). 
\end{cases} 
\label{equation:limit2}
\end{equation}
\end{lemma}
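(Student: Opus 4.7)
The plan is to reduce every claim to the corresponding claim in Lemma~\ref{theorem:change1}, exploiting the fact that by construction $F_\alpha^\mathcal{P}(\xi,\eta) = 2F_\alpha(\xi,\eta)$ whenever $\xi>0$ and $F_\alpha^\mathcal{P}(\xi,\eta)=0$ otherwise. The $\mathcal{P}$-symmetry $f(x+c,y)=f(-x+c,y)$ is what makes this one-sided function still sufficient to recover $f$ on the full line $x\in\mathbb{R}$.

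First I would verify the reconstruction formula \eqref{equation:inverse2} by splitting into the cases $x>c$ and $x<c$. For $x>c$, the substitution $\xi=\lvert{x-c}\rvert^\alpha=(x-c)^\alpha$ gives $\xi^{1/\alpha}=x-c$, so plugging into the definition of $F_\alpha^\mathcal{P}$ reproduces $f(x,y)$ directly. For $x<c$, the same substitution yields $\xi^{1/\alpha}=c-x$, which produces $f\bigl((c-x)+c,y\bigr)=f(-(x-c)+c,y)$; the $\mathcal{P}$-symmetry applied with variable $x-c$ converts this back to $f(x,y)$. At $x=c$ both sides vanish: the right side by the factor $\lvert{x-c}\rvert^{\alpha-1}$ with $\alpha>1$, and the left side because $f(c,y)=0$ (this corresponds to $k=0$ in the definition of $\mathscr{S}_{c,m}$, using $m\geqq 0$).

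The decay estimate \eqref{equation:decay2} is then immediate: for $\xi>0$ it follows from \eqref{equation:decay1} after doubling the constant $C_N$, and for $\xi\leqq 0$ the bound holds trivially because $F_\alpha^\mathcal{P}$ vanishes. Similarly, the boundary behavior splits cleanly. As $\xi\uparrow 0$ the limit is $0$ by the very definition of $F_\alpha^\mathcal{P}$. As $\xi\downarrow 0$, using $F_\alpha^\mathcal{P}(\xi,s\xi+u)=2F_\alpha(\xi,s\xi+u)$ and invoking \eqref{equation:limit1} with the $+$ sign (so $(\pm 1)^{m+1}=1$), I obtain exactly the value stated in \eqref{equation:limit2}.

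There is no serious obstacle here; the lemma is essentially a bookkeeping corollary of Lemma~\ref{theorem:change1} together with the symmetry built into $\mathscr{S}_{c,m}^\mathcal{P}(\mathbb{R}^2)$. The only point that requires mild care is ensuring that the $\mathcal{P}$-symmetry is used with the correct translation $x\mapsto x-c$ so that \eqref{equation:inverse2} matches on both sides of $x=c$, and that the sign convention in \eqref{equation:limit1} is tracked properly when specializing to $\xi\downarrow 0$.
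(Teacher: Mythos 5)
Your proof is correct and follows essentially the same route as the paper, which in fact omits the argument entirely with the remark that it proceeds ``in the same way'' as Lemma~\ref{theorem:change1}; your observation that $F_\alpha^\mathcal{P}=2F_\alpha$ for $\xi>0$ and vanishes otherwise, combined with the symmetry $f(x+c,y)=f(-x+c,y)$ and the case $k=0$ of the vanishing condition at $x=c$, is exactly the intended bookkeeping and handles all four claims correctly, including the sign $(\pm1)^{m+1}=1$ for $\xi\downarrow0$ and the factor $2$ in \eqref{equation:limit2}.
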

\begin{proof}
We can prove Lemma~\ref{theorem:change2} 
in the same way as the proof of Lemma~\ref{theorem:change1}. 
Here we omit the detail. 
\end{proof}
Finally, 
the properties $F_{\alpha,\beta}^\mathcal{R}$ for 
$f\in\mathscr{S}_{c,m}^\mathcal{R}(\mathbb{R}^2)$ 
are the following.  
\begin{lemma}
\label{theorem:change3}
For $f\in\mathscr{S}_{c,m}^\mathcal{R}(\mathbb{R}^2)$, 
$F_{\alpha,\beta}^\mathcal{R}(\xi,\eta)$ satisfies 
\begin{equation}
f(x,y)
=
\frac{\alpha}{2}
\lvert{x-c}\rvert^{\alpha-1}
\lvert{y}\rvert
F_{\alpha,\beta}^\mathcal{R}\bigl(\lvert{x-c}\rvert^\alpha,\lvert{y}\rvert^\beta\bigr), 
\label{equation:inverse3} 
\end{equation}
and for any $N>0$, there exists a constant $C_N>0$ such that 
\begin{equation}
\lvert{F_{\alpha,\beta}^\mathcal{R}(\xi,\eta)}\rvert
\leqq
C_N(1+\lvert\xi\rvert+\lvert\eta\rvert)^{-N}.
\label{equation:decay3} 
\end{equation}
Moreover, when $\xi\uparrow0$, 
$F_{\alpha,\beta}^\mathcal{R}(\xi,s\xi+u)\rightarrow0$,  
and when $\xi\downarrow0$, 
\begin{equation}
F_{\alpha,\beta}^\mathcal{R}(\xi,s\xi+u)
\rightarrow
\begin{cases}
0 
&\quad
(m>\alpha-2 \ \text{or} \ u\leqq0),
\\
\dfrac{1}{(m+2)!}
\displaystyle\int_0^1
\dfrac{\partial^{m+2} f}{\partial x^{m+1} \partial y}(c,\tau u^{1/\beta})
d\tau
&\quad
(m=\alpha-2 \ \text{and}\ u>0). 
\end{cases}
\label{equation:limit3} 
\end{equation}
\end{lemma}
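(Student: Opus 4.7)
My plan is to mirror the structure of the proof of Lemma~\ref{theorem:change1}, now pushing through also the vanishing and symmetry in the $y$-variable supplied by $\mathscr{S}_{c,m}^\mathcal{R}(\mathbb{R}^2)$.

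\emph{Step 1: the inversion identity \eqref{equation:inverse3}.} I would substitute $\xi=\lvert x-c\rvert^\alpha$ and $\eta=\lvert y\rvert^\beta$ directly into the definition of $F_{\alpha,\beta}^\mathcal{R}$. Since $\xi^{1/\alpha}=\lvert x-c\rvert$ and $\eta^{1/\beta}=\lvert y\rvert$, this yields
\[
F_{\alpha,\beta}^\mathcal{R}(\lvert x-c\rvert^\alpha,\lvert y\rvert^\beta)
=\frac{2\,f(\lvert x-c\rvert+c,\lvert y\rvert)}{\alpha\lvert x-c\rvert^{\alpha-1}\lvert y\rvert}
\]
for $x\neq c$ and $y\neq 0$. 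The two $\mathcal{R}$-symmetries $f(x+c,y)=f(-x+c,y)=f(x+c,-y)$ collapse $f(\lvert x-c\rvert+c,\lvert y\rvert)$ to $f(x,y)$, and rearranging produces \eqref{equation:inverse3}.

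\emph{Step 2: the decay estimate \eqref{equation:decay3}.} On $\lvert\xi\rvert>1$ or $\lvert\eta\rvert>1$, direct Schwartz decay of $f$ absorbs the polynomial weights $\xi^{(\alpha-1)/\alpha}$ and $\eta^{1/\beta}$ in the denominator. For the delicate region $0<\xi\leqq 1$, $0<\eta\leqq 1$ I would plug \eqref{equation:vanishing3} into the definition of $F_{\alpha,\beta}^\mathcal{R}$, obtaining
\[
F_{\alpha,\beta}^\mathcal{R}(\xi,\eta)
=\frac{2\xi^{(m+2-\alpha)/\alpha}}{\alpha\,m!}\int_0^1\!\!\int_0^1 (1-t)^m\frac{\partial^{m+2}f}{\partial x^{m+1}\partial y}\bigl(t\xi^{1/\alpha}+c,\tau\eta^{1/\beta}\bigr)\,dt\,d\tau,
\]
in which the exponent $(m+2-\alpha)/\alpha\geqq 0$ by the standing hypothesis $m\geqq\alpha-2$, and the integrand is Schwartz in both of its arguments. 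This delivers \eqref{equation:decay3}.

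\emph{Step 3: the boundary limits \eqref{equation:limit3}.} The case $\xi\uparrow 0$ is trivial because $F_{\alpha,\beta}^\mathcal{R}\equiv 0$ for $\xi\leqq 0$. For $\xi\downarrow 0$ with $u\leqq 0$, either $s\xi+u\leqq 0$ for all sufficiently small $\xi>0$ (so that $F_{\alpha,\beta}^\mathcal{R}(\xi,s\xi+u)=0$ outright), or $s\xi+u\downarrow 0$ and the evaluation point $\tau(s\xi+u)^{1/\beta}$ in the integrand of Step 2 collapses onto $y=0$; combining this with $m>\alpha-2$ or with $f(\cdot+c,0)\equiv 0$ (and the consequent vanishing of the evaluated partial derivative) kills the limit. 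For $\xi\downarrow 0$ with $u>0$, the argument $(s\xi+u)^{1/\beta}$ tends to $u^{1/\beta}>0$, the prefactor $\xi^{(m+2-\alpha)/\alpha}$ is either $0$ (when $m>\alpha-2$) or $1$ (when $m=\alpha-2$), and in the latter case passing to the limit under the integral produces the stated expression after collecting $\alpha\,m!(m+1)=(m+2)!$ via $\alpha=m+2$.

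\emph{Main obstacle.} The computations are routine; the only real bookkeeping challenge is to align the two separate Taylor expansions (in $x$ at $x=c$ to order $m$, and in $y$ at $y=0$ to order $1$) so that the generated prefactor $\xi^{(m+1)/\alpha}\eta^{1/\beta}$ exactly cancels the denominator $\alpha\xi^{(\alpha-1)/\alpha}\eta^{1/\beta}$ of $F_{\alpha,\beta}^\mathcal{R}$ while leaving a remaining $\xi$-power with a \emph{nonnegative} exponent. This is precisely what \eqref{equation:vanishing3} accomplishes, and the hypothesis $m\geqq\alpha-2$ is exactly what guarantees that the boundary behavior at $\xi\downarrow 0$ is finite and given by \eqref{equation:limit3}.
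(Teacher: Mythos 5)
Your overall strategy coincides with the paper's (reduce everything to the Taylor/mean-value identities of Lemma~\ref{theorem:vanishing} and read off the cancellation of the singular denominator), and your Step 1 and your treatment of the region $0<\xi\leqq1$, $0<\eta\leqq1$ are exactly the paper's argument. The genuine gap is in Step 2, in the claim that on $\lvert\xi\rvert>1$ or $\lvert\eta\rvert>1$ ``direct Schwartz decay of $f$ absorbs the polynomial weights in the denominator.'' This fails in the two mixed regions. For $\xi>1$ and $0<\eta\leqq1$ the denominator contains $\eta^{1/\beta}\rightarrow0$, and Schwartz decay gives no smallness of $f(\xi^{1/\alpha}+c,\eta^{1/\beta})$ as $\eta\downarrow0$; without using $f(x+c,0)=0$ the quotient could blow up like $\eta^{-1/\beta}$, and with it you still need a quantitative form, namely \eqref{equation:vanishing2} with $l=0$, which gives $F_{\alpha,\beta}^{\mathcal R}(\xi,\eta)=\frac{2}{\alpha\xi^{(\alpha-1)/\alpha}}\int_0^1\partial_y f(\xi^{1/\alpha}+c,\tau\eta^{1/\beta})\,d\tau$ and hence $O\bigl((1+\xi)^{-N}\bigr)$ uniformly in $0<\eta\leqq1$ from the Schwartz bounds on $\partial_yf$. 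Symmetrically, for $0<\xi\leqq1$ and $\eta>1$ the denominator contains $\xi^{(\alpha-1)/\alpha}\rightarrow0$ and you must invoke \eqref{equation:vanishing1} to produce the compensating factor $\xi^{(m+1)/\alpha}$ (this is where $m\geqq\alpha-2$ enters again), with the decay in $\eta$ coming from the Schwartz bounds on $\partial_x^{m+1}f$. These are precisely the two extra cases the paper works out; without them \eqref{equation:decay3} is not established.

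Two smaller points. First, your displayed formula correctly carries the factor $2$ from the definition of $F_{\alpha,\beta}^{\mathcal R}$, so following it to the limit yields $\frac{2}{(m+2)!}\int_0^1\partial_x^{m+1}\partial_yf(c,\tau u^{1/\beta})\,d\tau$ rather than the constant $\frac{1}{(m+2)!}$ printed in \eqref{equation:limit3}; your computation is the internally consistent one (the paper's corresponding display drops the $2$), and in any case only the finiteness of the limit is used downstream. Second, in the borderline case $u=0$, $s>0$, $m=\alpha-2$, the limit of your double integral is a nonzero multiple of $\partial_x^{m+1}\partial_yf(c,0)$, and its vanishing does not follow from $f(x+c,0)\equiv0$ alone (that controls $\partial_x^{m+1}f(\,\cdot\,,0)$, not the mixed derivative carrying a $\partial_y$); it follows from the evenness $f(x+c,y)=f(x+c,-y)$, which forces $\partial_yf(x+c,0)\equiv0$. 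The paper is silent on this point as well, but since you offered a justification, it should be the correct one.
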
 
\begin{proof}
Suppose $f\in\mathscr{S}_{c,m}^\mathcal{R}(\mathbb{R}^2)$. 
By using the symmetries, we have 
$$
f(x,y)
=
f\bigl((x-c)+c,y\bigr)
=
f(\lvert{x-c}\rvert+c,\lvert{y}\rvert)
=
\frac{\alpha}{2}\lvert{x-c}\rvert^{\alpha-1}\lvert{y}\rvert
F_{\alpha,\beta}^\mathcal{R}(\lvert{x-c}\rvert^\alpha,\lvert{y}\rvert^\beta), 
$$
which proves \eqref{equation:inverse3}. 
To complete the proof of Lemma~\ref{theorem:change3}, 
it suffices to show 
\eqref{equation:decay3} for $\xi>0$ and $\eta>0$, 
and \eqref{equation:limit3} for $\xi\downarrow0$. 
\par
By using the definition of $F_{\alpha,\beta}^\mathcal{R}$ and \eqref{equation:schwartz}, 
we obtain the decay estimate \eqref{equation:decay3} for $\xi>1$ and $\eta>1$. 
So we show \eqref{equation:decay3} for $0<\xi\leqq1$ or $0<\eta\leqq1$. 
When $\xi>1$ and $0<\eta\leqq1$, 
\eqref{equation:vanishing2} with $l=0$ implies 
$$
F_{\alpha,\beta}^\mathcal{R}(\xi,\eta)
=
\frac{1}{\alpha\xi^{(\alpha-1)/\alpha}}
\int_0^1
\frac{\partial f}{\partial y}
(\xi^{1/\alpha}+c,\tau\eta^{1/\beta})
d\tau. 
$$
Combining this and \eqref{equation:schwartz}, we have  
$F_{\alpha,\beta}^\mathcal{R}(\xi,\eta)=O\bigl((1+\xi)^{-N}\bigr)$ 
for any $N>0$ and $\xi>0$ uniformly in $0<\eta\leqq1$. 
\par
When $0<\xi\leqq1$ and $\eta>1$, 
\eqref{equation:vanishing1} implies 
$$
F_{\alpha,\beta}^\mathcal{R}(\xi,\eta)
=
\frac{\xi^{(m+2-\alpha)/\alpha}}{\alpha m!\eta^{1/\beta}}
\int_0^1
(1-t)^m
\frac{\partial^{m+1} f}{\partial x^{m+1}}
(t\xi^{1/\alpha}+c,\eta^{1/\beta})
dt. 
$$
By using this and \eqref{equation:schwartz}, we obtain 
$F_{\alpha,\beta}^\mathcal{R}(\xi,\eta)=O\bigl((1+\eta)^{-N}\bigr)$ 
for any $N>0$ and $\eta>1$ uniformly in $0<\xi\leqq1$. 
\par
When $0<\xi\leqq1$ and $0<\eta\leqq1$, 
\eqref{equation:vanishing3} implies 
$$
F_{\alpha,\beta}^\mathcal{R}(\xi,\eta)
=
\frac{\xi^{(m+2-\alpha)/\alpha}}{\alpha m!}
\int_0^1
\int_0^1
(1-t)^m
\frac{\partial^{m+2} f}{\partial x^{m+1} \partial y}
(t\xi^{1/\alpha}+c,\tau\eta^{1/\beta})
dtd\tau
=
O(1). 
$$
Combining the above estimates, we obtain \eqref{equation:decay3}. 
\par
If $u\leqq0$, then $s\xi+u \rightarrow 0$ and 
$F_{\alpha,\beta}^\mathcal{R}(\xi,s\xi+u)\rightarrow0$ as $\xi\rightarrow+0$. 
Note that if $u>0$, then $s\xi+u>0$ near $\xi=0$. 
By using \eqref{equation:vanishing3} again, we have for $0<\xi\ll1$ 
\begin{align*}
  F_{\alpha,\beta}^\mathcal{R}(\xi,s\xi+u)
& =
  \frac{\xi^{(m+2-\alpha)/\alpha}}{\alpha m!}
\\
& \times
  \int_0^1
  \int_0^1
  (1-t)^m
  \frac{\partial^{m+2} f}{\partial x^{m+1} \partial y}
  \bigl(t\xi^{1/\alpha}+c,\tau(s\xi+u)^{1/\beta}\bigr) 
  dtd\tau.
\end{align*}
By using this, we obtain \eqref{equation:limit3} for $u>0$. 
This completes the proof. 
\end{proof}
%
%
%
%
\section{Proof of Main Theorems}
\label{section:proofs}
We begin with computing 
$\mathcal{Q}_{\alpha}f(s,u)$, 
$\mathcal{P}_{\alpha}f(s,u)$ 
and  
$\mathcal{R}_{\alpha,\beta}f(s,u)$.  
\begin{lemma}
\label{theorem:radontransform}
\quad
\begin{itemize}
\item[{\rm (i)}] 
For $f\in\mathscr{S}_{c,m}(\mathbb{R}^2)$, 
\begin{align}
  \mathcal{Q}_{\alpha}f(s,u)
& =
  \frac{1}{\sqrt{1+s^2}}
  \cdot
  \mathcal{X}F_\alpha
  \left(\operatorname{Arccot}(-s),\frac{u}{\sqrt{1+s^2}}\right),
\label{equation:radon1}
\\
  \partial_u\mathcal{Q}_{\alpha}f(s,u)
& =
  \frac{1}{1+s^2}
  \cdot
  (\partial_t\mathcal{X}F_\alpha)
  \left(\operatorname{Arccot}(-s),\frac{u}{\sqrt{1+s^2}}\right).
\label{equation:derivative1} 
\end{align}
\item[{\rm (ii)}] 
For $f\in\mathscr{S}_{c,m}^\mathcal{P}(\mathbb{R}^2)$, 
\begin{align}
  \mathcal{P}_{\alpha}f(s,u)
& =
  \frac{1}{\sqrt{1+s^2}}
  \cdot
  \mathcal{X}F_\alpha^\mathcal{P}
  \left(\operatorname{Arccot}(-s),\frac{u}{\sqrt{1+s^2}}\right),
\label{equation:radon2}
\\
  \partial_u\mathcal{P}_{\alpha}f(s,u)
& =
  \frac{1}{1+s^2}
  \cdot
  (\partial_t\mathcal{X}F_\alpha^\mathcal{P})
  \left(\operatorname{Arccot}(-s),\frac{u}{\sqrt{1+s^2}}\right).
\label{equation:derivative2} 
\end{align}
\item[{\rm (iii)}] 
For $f\in\mathscr{S}_{c,m}^\mathcal{R}(\mathbb{R}^2)$, 
\begin{align}
  \mathcal{R}_{\alpha,\beta}f(s,u)
& =
  \frac{1}{\sqrt{1+s^2}}
  \cdot
  \mathcal{X}F_{\alpha,\beta}^\mathcal{R}
  \left(\operatorname{Arccot}(-s),\frac{u}{\sqrt{1+s^2}}\right),
\label{equation:radon3}
\\
  \partial_u\mathcal{R}_{\alpha,\beta}f(s,u)
& =
  \frac{1}{1+s^2}
  \cdot
  (\partial_t\mathcal{X}F_{\alpha,\beta}^\mathcal{R})
  \left(\operatorname{Arccot}(-s),\frac{u}{\sqrt{1+s^2}}\right).
\label{equation:derivative3} 
\end{align}
\end{itemize}
\end{lemma}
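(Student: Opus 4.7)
The plan is to apply the change of variables $\xi = x|x|^{\alpha-1}$ (or $\xi = x^\alpha$ on $x>0$) introduced before Lemma~\ref{theorem:change1} to recast each curve integral as a line integral in the auxiliary $(\xi,\eta)$-plane, and then to recognize that line integral as a constant multiple of an X-ray transform evaluated at specific parameters.

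First, for $\mathcal{Q}_\alpha f(s,u)$ the substitution $\xi = x|x|^{\alpha-1}$ is a $C^1$ bijection of $\mathbb{R}\setminus\{0\}$ onto itself with $d\xi = \alpha|x|^{\alpha-1}\,dx$, and $sx|x|^{\alpha-1}+u = s\xi+u$. The integrand becomes $F_\alpha(\xi,s\xi+u)$, giving $\mathcal{Q}_\alpha f(s,u) = \int_{-\infty}^\infty F_\alpha(\xi,s\xi+u)\,d\xi$. For $\mathcal{P}_\alpha f(s,u)$, the $\mathcal{P}$-symmetry $f(x+c,y)=f(-x+c,y)$ makes the integrand even in $x$, so doubling the integral over $x>0$ and applying $\xi=x^\alpha$ produces $\int_{-\infty}^\infty F_\alpha^\mathcal{P}(\xi,s\xi+u)\,d\xi$, since $F_\alpha^\mathcal{P}$ vanishes on $\xi\le 0$. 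For $\mathcal{R}_{\alpha,\beta}f(s,u)$, the same parity argument combined with $\xi=x^\alpha$ absorbs the factor $(s|x|^\alpha+u)^{-1/\beta}$ into the definition of $F_{\alpha,\beta}^\mathcal{R}$, while the restriction $s|x|^\alpha+u>0$ becomes $s\xi+u>0$, which matches the support condition built into $F_{\alpha,\beta}^\mathcal{R}$; extending by $0$ to the whole line gives $\int_{-\infty}^\infty F_{\alpha,\beta}^\mathcal{R}(\xi,s\xi+u)\,d\xi$.

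With all three reduced to the common form $\int_{-\infty}^\infty G(\xi,s\xi+u)\,d\xi$, I parametrize the line $\eta=s\xi+u$ by $\xi=t\cos\theta-\sigma\sin\theta$, $\eta=t\sin\theta+\sigma\cos\theta$. Requiring this parametric line to coincide with $\eta=s\xi+u$ forces $\cos\theta+s\sin\theta=0$, i.e.\ $\cot\theta=-s$, so one may take $\theta=\operatorname{Arccot}(-s)\in(0,\pi)$ with $\sin\theta=1/\sqrt{1+s^2}$; the condition that $(t\cos\theta,t\sin\theta)$ lies on the line then yields $t=u/\sqrt{1+s^2}$. Substituting $d\xi=-\sin\theta\,d\sigma$ and reversing the $\sigma$-orientation produces a factor $\sin\theta=1/\sqrt{1+s^2}$, proving \eqref{equation:radon1}, \eqref{equation:radon2}, and \eqref{equation:radon3}. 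The derivative identities \eqref{equation:derivative1}--\eqref{equation:derivative3} then follow by differentiating under the integral sign in $u$ and applying the chain rule with $\partial t/\partial u=1/\sqrt{1+s^2}$, which contributes the second factor of $1/\sqrt{1+s^2}$.

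The main obstacle is to justify the change-of-variable step through the singularity $|\xi|^{-(\alpha-1)/\alpha}$ of the Jacobian at $\xi=0$: this is precisely where the hypothesis $m\ge\alpha-2$ is required. The decay estimates \eqref{equation:decay1}--\eqref{equation:decay3} established in Lemmas~\ref{theorem:change1}--\ref{theorem:change3} show that $F_\alpha$, $F_\alpha^\mathcal{P}$, and $F_{\alpha,\beta}^\mathcal{R}$ are uniformly bounded near $\xi=0$ and rapidly decreasing at infinity, so the line integrals are absolutely convergent Lebesgue integrals, and differentiation under the integral sign together with the subsequent identification with the X-ray transform becomes routine.
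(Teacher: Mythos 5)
Your proposal is correct and follows essentially the same route as the paper: the substitution $\xi=x\lvert x\rvert^{\alpha-1}$ (doubling over $x>0$ for the symmetric cases) reduces each transform to $\int_{-\infty}^{\infty}G(\xi,s\xi+u)\,d\xi$, the rotation with $\theta=\operatorname{Arccot}(-s)$, $t=u/\sqrt{1+s^2}$ identifies this with $(1+s^2)^{-1/2}\mathcal{X}G$, and differentiation in $u$ gives the second set of identities. The paper justifies the improper integral near $\xi=0$ by citing both the decay bounds and the finite one-sided limits \eqref{equation:limit1}--\eqref{equation:limit3}, while you invoke only the uniform boundedness from \eqref{equation:decay1}--\eqref{equation:decay3}, which is an equivalent and sufficient justification.
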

\begin{proof} 
Firstly, we show (i). 
Suppose that $f\in\mathscr{S}_{c,m}(\mathbb{R}^2)$. 
Recall the definition of $\mathcal{Q}_{\alpha}f(s,u)$:
$$
\mathcal{Q}_{\alpha}f(s,u)
=
\int_{-\infty}^\infty
f(x+c,sx\lvert{x}\rvert^{\alpha-1}+u)
dx. 
$$
We want to make use of the change of variable 
$x=\xi\lvert\xi\rvert^{-1+1/\alpha}$ for $\xi\ne0$. 
For this reason, 
we regard the above integration as the sum of improper integrals 
on intervals $(-\infty,0)$ and $(0,\infty)$. 
Since $dx/d\xi=1/(\alpha\lvert\xi\rvert^{(\alpha-1)/\alpha})$ 
and $x\lvert{x}\rvert^{\alpha-1}=\xi$ for $\xi\ne0$, 
we have 
\begin{equation}
\mathcal{Q}_{\alpha}f(s,u)
=
\int_{-\infty}^\infty
\frac{f(\xi\lvert\xi\rvert^{-1+1/\alpha}+c,s\xi+u)}{\alpha\lvert\xi\rvert^{(\alpha-1)/\alpha}}
d\xi
=
\int_{-\infty}^\infty
F_\alpha(\xi,s\xi+u)
d\xi. 
\label{equation:change11}
\end{equation}
\eqref{equation:decay1} implies 
$F_\alpha(\xi,s\xi+u)=O\bigl((1+\lvert\xi\rvert)^{-2}\bigr)$, 
and \eqref{equation:limit1} shows that 
$F_\alpha(\xi,s\xi+u)$ has finite limits when $\pm\xi\downarrow0$.  
Then the computation in the above integration can be justified. 
\par
Now we apply another change of variable 
$\mathbb{R}\ni\xi \mapsto \sigma\in\mathbb{R}$ 
defined by 
$$
\xi=-\frac{\sigma}{\sqrt{1+s^2}}-\frac{su}{1+s^2}. 
$$
We can see 
$$
-\frac{s}{\sqrt{1+s^2}}=\cos\theta, 
\quad
\frac{1}{\sqrt{1+s^2}}=\sin\theta
$$
for $s\in\mathbb{R}$. 
When $s$ moves from $-\infty$ to $\infty$, 
$\theta$ moves from $0$ to $\pi$. 
Hence $s=-\cot\theta$, that is, $\theta=\operatorname{Arccot}(-s)$. 
In this case, 
$$
(\xi,s\xi+u)
=
\left(
\frac{u}{\sqrt{1+s^2}}
\cos\theta
-
\sigma\sin\theta,
\frac{u}{\sqrt{1+s^2}}
\sin\theta
+
\sigma\cos\theta
\right),
$$
and $d\xi/d\sigma=-1/\sqrt{1+s^2}$. 
Hence \eqref{equation:change11} becomes 
\begin{align*}
  \mathcal{Q}_{\alpha}f(s,u)
& =
  \frac{1}{\sqrt{1+s^2}}
  \int_{-\infty}^\infty
  F_\alpha
  \left(
  \frac{u}{\sqrt{1+s^2}}
  \cos\theta
  -
  \sigma\sin\theta,
  \frac{u}{\sqrt{1+s^2}}
  \sin\theta
  +
  \sigma\cos\theta
  \right)
  d\sigma
\\
& =
  \frac{1}{\sqrt{1+s^2}}
  \cdot
  \mathcal{X}F_\alpha
  \left(\operatorname{Arccot}(-s),\frac{u}{\sqrt{1+s^2}}\right), 
\end{align*}
which is \eqref{equation:radon1}. 
Differentiating this with respect to u, 
we can get \eqref{equation:derivative1}. 
\par
Secondly, we prove (ii). 
Differentiating \eqref{equation:radon2} with respect to u, 
one can get \eqref{equation:derivative2}. 
Here we show only \eqref{equation:radon2}. 
Suppose that $f\in\mathscr{S}_{c,m}^\mathcal{P}(\mathbb{R}^2)$. 
By using the symmetry $f(x+c,y)=f(-x+c,y)$ 
and the same change of variable 
$x \mapsto \xi$ in the proof of (i), we deduce that 
\begin{align*}
  \mathcal{P}_{\alpha}f(s,u)
& =
  \int_{-\infty}^\infty
  f(x+c,s\lvert{x}\rvert^\alpha+u)
  dx
\\
& =
  2
  \int_0^\infty
  f(x+c,s\lvert{x}\rvert^\alpha+u)
  dx   
\\
& =
  \int_0^\infty
  \frac{2f(\xi^{1/\alpha}+c,s\xi+u)}{\alpha\xi^{(\alpha-1)/\alpha}}
  d\xi
\\
& =
  \int_0^\infty
  F_\alpha^\mathcal{P}
  (\xi,s\xi+u)
  d\xi
\\
& =
  \int_{-\infty}^\infty
  F_\alpha^\mathcal{P}
  (\xi,s\xi+u)
  d\xi.
\end{align*}
\eqref{equation:decay2} implies 
$F_\alpha^\mathcal{P}(\xi,s\xi+u)=O\bigl((1+\lvert\xi\rvert)^{-2}\bigr)$, 
and \eqref{equation:limit2} shows that 
$F_\alpha^\mathcal{P}(\xi,s\xi+u)$ has finite limits when $\pm\xi\downarrow0$.  
Then the computation in the above integration can be justified. 
Using the same change of variable $\xi \mapsto \sigma$ 
as in the proof of \eqref{equation:radon1}, 
we obtain \eqref{equation:radon2}. 
\par
Finally, we prove (iii). 
Differentiating \eqref{equation:radon3} with respect to u, 
one can get \eqref{equation:derivative3}. 
Here we show only \eqref{equation:radon3}. 
Suppose that $f\in\mathscr{S}_{c,m}^\mathcal{R}(\mathbb{R}^2)$. 
By using the symmetry $f(x+c,y)=f(-x+c,y)$ 
and the same change of variable 
$x \mapsto \xi$ in the proof of (i), we deduce that 
\begin{align*}
  \mathcal{R}_{\alpha,\beta}f(s,u)
& =
  \int_{\substack{x\in\mathbb{R} \\ s\lvert{x}\rvert^\alpha+u>0}}
  \frac{f(x+c,\{s\lvert{x}\rvert^\alpha+u\}^{1/\beta})}{\{s\lvert{x}\rvert^\alpha+u\}^{1/\beta}}
  dx
\\
& =
  2
  \int_{\substack{x>0 \\ s\lvert{x}\rvert^\alpha+u>0}}
  \frac{f(x+c,\{sx^\alpha+u\}^{1/\beta})}{\{sx^\alpha+u\}^{1/\beta}}
  dx   
\\
& =
  2
  \int_{\substack{\xi>0 \\ s\xi+u>0}}
  \frac{f(\xi^{1/\alpha}+c,\{s\xi+u\}^{1/\beta})}{\alpha\xi^{(\alpha-1)/\alpha}\{s\xi+u\}^{1/\beta}}
  d\xi
\\
& =
  \int_0^\infty
  F_{\alpha,\beta}^\mathcal{R}
  (\xi,s\xi+u)
  d\xi
\\
& =
  \int_{-\infty}^\infty
  F_{\alpha,\beta}^\mathcal{R}
  (\xi,s\xi+u)
  d\xi.
\end{align*}
\eqref{equation:decay3} implies 
$F_{\alpha,\beta}^\mathcal{R}(\xi,s\xi+u)=O\bigl((1+\lvert\xi\rvert)^{-2}\bigr)$, 
and \eqref{equation:limit3} shows that 
$F_{\alpha,\beta}^\mathcal{R}(\xi,s\xi+u)$ has finite limits when $\pm\xi\downarrow0$.  
Then the computation in the above integration can be justified.Using the same change of variable $\xi \mapsto \sigma$ 
as in the proof of \eqref{equation:radon1}, 
we obtain \eqref{equation:radon3}. 
\end{proof}
Now we shall prove Theorem~\ref{theorem:main}. 
\begin{proof}[Proof of Theorem~\ref{theorem:main}]
Firstly, we prove \eqref{equation:inversion1}. 
Suppose that $f\in\mathscr{S}_{c,m}(\mathbb{R}^2)$. 
By using 
\eqref{equation:inverse1} in Lemma~\ref{theorem:change1} 
and Theorem~\ref{theorem:inversionX}, 
we deduce that 
\begin{align*}
  f(x,y)
& =
  \alpha\lvert{x-c}\rvert^{\alpha-1}
  F_\alpha\bigl((x-c)\lvert{x-c}\rvert^{\alpha-1},y\bigr)
\\
& =
  \frac{\alpha\lvert{x-c}\rvert^{\alpha-1}}{2\pi^2}
  \int_0^\pi
  \left(
  \operatorname{vp}
  \int_{-\infty}^\infty
  \frac{(\partial_t \mathcal{X}F_\alpha)(\theta,t)}{(x-c)\lvert{x-c}\rvert^{\alpha-1}\cos\theta+y\sin\theta-t}
  dt
  \right)
  d\theta.
\end{align*}
Here we use the change of variables $(\theta,t)=(\operatorname{Arccot}(-s),u/\sqrt{1+s^2})$, 
whose Jacobian is 
$$
\frac{\partial(\theta,t)}{\partial(s,u)}
=
\det
\begin{bmatrix}
1/(1+s^2) & 0
\\
\ast & 1/\sqrt{1+s^2} 
\end{bmatrix}
=
\frac{1}{(1+s^2)^{3/2}}. 
$$
Using this and 
\eqref{equation:derivative1} in Lemma~\ref{theorem:radontransform} in order, 
we deduce that 
\begin{align*}
  f(x,y)
& =
  \frac{\alpha\lvert{x-c}\rvert^{\alpha-1}}{2\pi^2}
  \int_{-\infty}^\infty
  \left(
  \operatorname{vp}
  \int_{-\infty}^\infty
  \right.
\\
& \qquad\qquad\times
  \cfrac{1}{-(x-c)\lvert{x-c}\rvert^{\alpha-1}\cfrac{s}{\sqrt{1+s^2}}+y\cfrac{1}{\sqrt{1+s^2}}-\cfrac{u}{\sqrt{1+s^2}}}
\\
& \qquad\qquad\times
  \left.
  \frac{1}{(1+s^2)^{3/2}}
  \cdot
  (\partial_t \mathcal{X}F_\alpha)\left(\operatorname{Arccot}(-s),\frac{u}{\sqrt{1+s^2}}\right)
  du
  \right)
  ds
\\
& =
  \frac{\alpha\lvert{x-c}\rvert^{\alpha-1}}{2\pi^2}
  \int_{-\infty}^\infty
  \left(
  \operatorname{vp}
  \int_{-\infty}^\infty
  \frac{1}{y-s(x-c)\lvert{x-c}\rvert^{\alpha-1}-u}
  \right.
\\
& \qquad\qquad\times
  \left.
  \frac{1}{1+s^2}
  \cdot
  (\partial_t \mathcal{X}F_\alpha)\left(\operatorname{Arccot}(-s),\frac{u}{\sqrt{1+s^2}}\right)
  du
  \right)
  ds
\\
& =
  \frac{\alpha\lvert{x-c}\rvert^{\alpha-1}}{2\pi^2}
  \int_{-\infty}^\infty
  \left(
  \operatorname{vp}
  \int_{-\infty}^\infty
  \frac{\partial_u \mathcal{Q}_{\alpha}f(s,u)}{y-s(x-c)\lvert{x-c}\rvert^{\alpha-1}-u}
  du
  \right)
  ds, 
\end{align*}
which is \eqref{equation:inversion1}. 
\par
Secondly, we prove \eqref{equation:inversion2}. 
Suppose that $f\in\mathscr{S}_{c,m}^\mathcal{P}(\mathbb{R}^2)$. 
By using 
\eqref{equation:inverse2} in Lemma~\ref{theorem:change2} 
and Theorem~\ref{theorem:inversionX}, 
we deduce that 
\begin{align*}
  f(x,y)
& =
  \frac{\alpha}{2}\lvert{x-c}\rvert^{\alpha-1}
  F_\alpha^\mathcal{P}(\lvert{x-c}\rvert^\alpha,y)
\\
& =
  \frac{\alpha\lvert{x-c}\rvert^{\alpha-1}}{4\pi^2}
  \int_0^\pi
  \left(
  \operatorname{vp}
  \int_{-\infty}^\infty
  \frac{(\partial_t \mathcal{X}F_\alpha^\mathcal{P})(\theta,t)}{\lvert{x-c}\rvert^\alpha\cos\theta+y\sin\theta-t}
  dt
  \right)
  d\theta.
\end{align*}
By using the change of variables $(\theta,t)=(\operatorname{Arccot}(-s),u/\sqrt{1+s^2})$ and 
\eqref{equation:derivative2} in Lemma~\ref{theorem:radontransform} in order, 
we can obtain \eqref{equation:inversion2} in the same way as \eqref{equation:inversion1}. 
Here we omit the detail. 
\par
Finally, we prove \eqref{equation:inversion3}. 
Suppose that $f\in\mathscr{S}_{c,m}^\mathcal{R}(\mathbb{R}^2)$. 
By using 
\eqref{equation:inverse3} in Lemma~\ref{theorem:change3} 
and Theorem~\ref{theorem:inversionX}, 
we deduce that 
\begin{align*}
  f(x,y)
& =
  \frac{\alpha}{2}\lvert{x-c}\rvert^{\alpha-1}\lvert{y}\rvert
  F_{\alpha,\beta}^\mathcal{R}(\lvert{x-c}\rvert^\alpha,\lvert{y}\rvert^\beta)
\\
& =
  \frac{\alpha\lvert{x-c}\rvert^{\alpha-1}\lvert{y}\rvert}{4\pi^2}
  \int_0^\pi
  \left(
  \operatorname{vp}
  \int_{-\infty}^\infty
  \frac{(\partial_t \mathcal{X}F_{\alpha,\beta}^\mathcal{R})(\theta,t)}{\lvert{x-c}\rvert^\alpha\cos\theta+\lvert{y}\rvert^\beta\sin\theta-t}
  dt
  \right)
  d\theta.
\end{align*}
By using the change of variables $(\theta,t)=(\operatorname{Arccot}(-s),u/\sqrt{1+s^2})$ and 
\eqref{equation:derivative3} in Lemma~\ref{theorem:radontransform} in order, 
we can obtain \eqref{equation:inversion3} in the same way as \eqref{equation:inversion1}. 
Here we omit the detail. 
This completes the proof.
\end{proof}
%
%
\begin{flushleft}
{\bf Acknowledgments} 
\end{flushleft}
The author would like to thank referees for reading the first version of the manuscript carefully and giving valuable comments.  
%
%
\begin{flushleft}
{\bf Disclosure statement} 
\end{flushleft}
No potential conflict of interest was reported by the author. 
%
%
\begin{flushleft}
{\bf Funding} 
\end{flushleft}
The author was supported by 
the JSPS Grant-in-Aid for Scientific Research \#19K03569.
%
%


\begin{thebibliography}{99}
\bibitem{Cormack1981}
Cormack AM.  
The Radon transform on a family of curves in the plane. 
Proc Amer Math Soc. 1981;83:325--330.

\bibitem{Cormack1982}
Cormack AM.  
The Radon transform on a family of curves in the plane II.  
Proc Amer Math Soc. 1982;86:293--298.

\bibitem{Denecker1998} 
Denecker K, van Overloop J, Sommen F.  
The general quadratic Radon transform. 
Inverse Problems. 1998;14:615--633.

\bibitem{Jollivet2011} 
Jollivet A, Nguyen MK, Truong TT. 
Properties and Inversion of a new Radon Transform on
parabolas with fixed axis direction in $\mathbb{R}^2$.  
manuscript. 
{\tt http://math.univ-lille1.fr/{\textasciitilde}jollivet/}. 2011.

\bibitem{Moon2016} 
Moon S.   
Inversion of the seismic parabolic Radon transform 
and the seismic hyperbolic Radon transform.  
Inverse Probl Sci Eng. 2016;24:317--327.

\bibitem{Ustaoglu2017}
Ustaoglu Z. 
On the inversion of a generalized Radon transform of seismic type.  
J Math Anal Appl. 2017;453:287--303. 

\bibitem{Hampson1986}
Hampson D. 
Inverse velocity stacking for multiple elimination. 
Journal of the Canadian Society of Exploration Geophysics.  
1986;22:44--55.

\bibitem{Maeland1998}
Maeland E. 
Focusing aspects of the parabolic Radon transform.  
Geophysics. 1998;63:1708--1715.

\bibitem{Bickel2000}
Bickel SH. 
Focusing aspects of the hyperbolic Radon transform. 
Geophysics. 2000;65:652--655.

\bibitem{Helgason2011}
Helgason S. 
Integral Geometry and Radon Transforms. 
New York (NY): Springer-Verlag; 2011.

\end{thebibliography}
\end{document}